\newtheorem{theorem}{Theorem}
\newtheorem{proposition}{Proposition}
\date{}
\numberwithin{equation}{section}
\numberwithin{theorem}{section}
\numberwithin{lemma}{section}
\numberwithin{corollary}{section}
\numberwithin{remark}{section} 
\numberwithin{proposition}{section}
\numberwithin{definition}{section}
\def \Div {\mathrm{div}} 
\def \R {\mathbb{R}}
\begin{document}

\title[An optimal Liouville theorem for the porous medium equation]{An optimal Liouville theorem for the porous medium equation}

\author[D.J. Ara\'ujo]{Dami\~ao J. Ara\'ujo}
\address{Department of Mathematics, Federal University of Para\'iba, 58059-900, Jo\~ao Pessoa, PB, Brazil}
\email{araujo@mat.ufpb.br}

\author[R. Teymurazyan]{Rafayel Teymurazyan}
\address{CMUC, Department of Mathematics, University of Coimbra, 3001-501 Coimbra, Portugal}
\email{rafayel@utexas.edu}

\begin{abstract}
	Under a sharp asymptotic growth condition at infinity, we prove a Liouville type theorem for the inhomogeneous porous medium equation, provided it stays universally close to the heat equation. Additionally, for the homogeneous equation, we show that for the conclusion to hold, it is enough to assume the sharp asymptotic growth at infinity only in the space variable. The results are optimal, meaning that the growth condition at infinity cannot be weakened.

\bigskip

\noindent \textbf{MSC 2020:} 35B53, 35K55, 35K65, 76S05.

\bigskip

\noindent \textbf{Keywords:} Porous medium equation, Liouville theorem, intrinsic scaling, degenerate parabolic equations.

\end{abstract}

\maketitle

\section{Introduction}\label{s1}
As it is well known, bounded harmonic function in the whole space $\R^n$ must be a constant (Liouville theorem). This is true for parabolic equations as well, namely, in $\R^n\times\R_-$ bounded solutions of the heat equation are constant, \cite{W75}. However, there is a key difference between elliptic and parabolic equations. For instance, entire harmonic functions, which are bounded only from below (or above), are constant, \cite{GT01}, but one sided bound is not enough to make the same conclusion for entire caloric functions, as shows the example of the function $u(x,t)=e^{x+t}$, which solves the heat equation in $\R\times\R$, is bounded from below, but obviously is not a constant. Nevertheless, the absence of the two sided bound can be compensated by a growth condition, as established by Bernstein. He argued that in the plane entire solutions of uniformly elliptic equations, growing sublinearly at infinity, must be constant, \cite{GT01}. A result of similar spirit was established for entire solutions of the heat equation by Hirschman in \cite{H52}. Observe that for the inhomogeneous equation, even in the uniformly elliptic case, one sided bound is not enough to guarantee a Liouville type result, as shows the example of the non-constant function $u(x)=|x|^2\ge0$, which satisfies $\Delta u=2n$ in $\R^n$. Recent advances in geometric analysis, \cite{DGV08,DGV12,TU142}, allowed to obtain Liouville type results for degenerate elliptic and parabolic equations. These results are intrinsically related to the regularity estimates (see, for example, \cite{KSZ07,TU14}). In particular, using sharp H\"older regularity of solutions, in \cite{TU14} the authors show that weak solutions of
$$
u_t-\Div\left(|\nabla u|^{p-2}\nabla u\right)=0,\,\,\,p>2
$$
in $\R^n\times\R$ are constant, provided their growth at infinity is controlled in an intrinsic manner. Local $C^{\alpha,\alpha/2}$ estimates for weak solutions of the inhomogeneous porous medium type equation
\begin{equation}\label{1.1}
	u_t-\Div(u^\gamma\nabla u)=f,
\end{equation}
where $\gamma\ge0$ and $f\in L^\infty$, \cite{DGV08,DGV12}, suggest similar result also for entire solutions of \eqref{1.1}, particularly, for those behaving like $o(|x|^\alpha+|t|^{\frac{\alpha}{^{2}}})$, as $|x|+|t|\to\infty$. Note that \eqref{1.1} admits solutions with faster growth rate, such as, for $\gamma>0$, 
$$
u(x,t)= \left(\frac{1}{\gamma}\right)^{\frac{1}{\gamma}}|x|^{\frac{2}{\gamma}}(-t)^{-\frac{1}{\gamma}} \quad \mbox{ in }\quad\; \mathbb{R}^n \times \mathbb{R}_{-},
$$
indicating that there is an upper bound for $\alpha$ under which Liouville type results hold. 

In this note, using a higher (optimal) regularity estimate obtained recently in \cite{A19}, we prove an optimal Liouville theorem for solutions of \eqref{1.1} in a strip, provided the diffusion parameter $\gamma\ge0$ is small. More precisely, we show that if a solution of \eqref{1.1} vanishes at a point and 
\begin{equation}\label{condition}
	u(x,t) = o\left(|x|+|t|^{\frac{1}{^{\sigma}}}\right)^{\sigma}, \quad \mbox{as}\quad |x|+|t|\to\infty,
\end{equation}
where $\sigma=2(\gamma+1)^{-1}$, then it must vanish at all ``ancient'' times (Theorem \ref{Liouville}). Moreover, our result is optimal, meaning that the growth condition at infinity cannot be weakened, as shows the example of the solution $u(x,t)=|x|^{\frac{2}{\gamma+1}}$, which vanishes at $\{0\}\times\R$, satisfies \eqref{condition} for $\sigma=2(\gamma+1)^{-1}+\delta$, $\delta>0$, and obviously is not identically zero. Furthermore, for the homogeneous equation, we show that for the conclusion to hold, it is enough to assume the sharp asymptotic growth at infinity only in the space variable (Theorem \ref{t3.2}). Our result extends similar results for the heat equation, \cite{H52,W75}, and adjusts the result from \cite[Proposition 16.2]{DGV12}. It also covers equations with obstacle type non-homogeneity (see \cite{BLS15,CS20}), such as
$$
u_t-\Div(u^\gamma\nabla u)=h\chi_{\{u>0\}},
$$
where $h\in L^\infty$ and $\chi_{\{u>0\}}$ is the characteristic function of the set $\{u>0\}$. Unlike \cite{AS10,S09}, the degree of homogeneity in \eqref{1.1} does not depend on the right hand side.

\section{Preliminaries}\label{s2}
One may notice that the classical porous medium equation has the coefficient $(\gamma+1)$ in the divergence term of \eqref{1.1}, however, it does not affect the proofs of the arguments, therefore, to keep things simple, we consider the equation \eqref{1.1}. 

Next, we introduce notations and recall a regularity result for future reference. We start by defining the intrinsic cylinder by
$$
G_r(x,t):=B_r(x)\times\left(-r^{\frac{2}{\gamma+1}}+t,\,t\right],\,\,\,r>0
$$
and the intrinsic norm by
$$
\|(x,t)\|:=|x|+|t|^{\frac{\gamma+1}{2}}.
$$
In what follows, we will use $G_r$ for $G_r(0,0)$. We also define the $T$-level strip by
$$
S_T:=\R^n\times(-\infty, T),\,\,\,T\in\R.
$$
Solutions of \eqref{1.1} are understood in the weak sense (for the precise definition see, for example, \cite{DGV12,V07}). Our results makes use of a higher (optimal) regularity result obtained for solutions of the porous medium equation near the heat equation. More precisely, we use the following regularity result  from \cite[Theorem 2]{A19}. 
\begin{theorem}\label{growth}
	If $u\ge0$ is a weak solution of 
	\begin{equation}\label{nonhomogeneousequation}
		u_t-\Div(u^\gamma\nabla u)=f\,\,\,\textrm{ in }\,\,\,G_1,
	\end{equation}
	with $f\in L^\infty(G_1)$, then there exist $\varepsilon>0$ and $C>0$, depending only on $\|f\|_\infty$ and $n$, such that for $\gamma\in(0,\varepsilon)$ and $(x_0,t_0)\in\partial\{u>0\}\cap G_{1/2}$, in $G_{1/5}(x_0,t_0)$, one has
	$$
	u(x,t)\le C\|u\|_\infty\|(x-x_0,t-t_0)\|^{\frac{2}{\gamma+1}}.
	$$
\end{theorem}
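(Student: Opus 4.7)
The plan is to establish the estimate by a compactness/iteration scheme in the intrinsic parabolic scaling, exploiting the proximity of \eqref{nonhomogeneousequation} to the heat equation when $\gamma$ is small. After translating to place $(x_0, t_0) = (0, 0)$ and performing a multiplicative normalization reducing to $\|u\|_{L^\infty(G_1)} \le 1$, the key structural feature is that the intrinsic rescaling
\begin{equation*}
u_r(y, s) := r^{-2/(\gamma+1)} u(ry, r^{2/(\gamma+1)} s)
\end{equation*}
leaves \eqref{nonhomogeneousequation} invariant and converts $f$ into $r^{2/(\gamma+1)} f$, of strictly smaller $L^\infty$-norm when $r < 1$.

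The core ingredient is an approximation lemma asserting that for every $\eta > 0$ there exist $\varepsilon, \delta > 0$ such that any non-negative weak solution $u$ of \eqref{nonhomogeneousequation} in $G_1$ with $\gamma \in (0, \varepsilon)$, $\|f\|_\infty \le \delta$, $\|u\|_\infty \le 1$, and $u(0,0) = 0$ admits a non-negative caloric function $h$ on $G_{1/2}$ with $h(0,0) = 0$ and $\|u-h\|_{L^\infty(G_{1/2})} \le \eta$. I would prove this by contradiction/compactness: any failing sequence $u_k$ with $\gamma_k \to 0^+$ and $\|f_k\|_\infty \to 0$ is uniformly H\"older continuous on compact subsets by the intrinsic estimates of \cite{DGV08,DGV12}; extracting a uniform subsequential limit $u_\infty$ and passing to the limit in the weak formulation of \eqref{nonhomogeneousequation} (using $u_k^{\gamma_k+1}/(\gamma_k+1) \to u_\infty$) shows that $u_\infty$ is caloric, non-negative, and vanishes at the origin, contradicting the hypothesis.

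I then iterate the approximation dyadically at an intrinsic scale $\rho$, proving by induction on $k \ge 0$ that
\begin{equation*}
\sup_{G_{\rho^k}} u \le M \rho^{2k/(\gamma+1)}
\end{equation*}
for appropriate choices of $\rho$ and $M$. For the inductive step, the rescaled $v_k(y, s) := \rho^{-2k/(\gamma+1)} u(\rho^k y, \rho^{2k/(\gamma+1)} s)$ satisfies \eqref{nonhomogeneousequation} on $G_1$ with small forcing, is bounded by $M$, and vanishes at the origin. The approximation lemma provides a non-negative caloric $h$ with $h(0,0) = 0$ and $\|v_k - h\|_\infty \le \eta$. Since $h \ge 0$ attains a minimum at $(0,0)$, the minimum principle forces $\nabla h(0,0) = 0$, and combined with the equation $h_t = \Delta h$ one also gets $h_t(0,0) = \Delta h(0,0) = 0$; the parabolic Taylor expansion then yields $\sup_{G_\rho} h \le C_0(n) \|h\|_{L^\infty(G_{1/2})} \rho^2$. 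Rescaling back delivers the next step of the induction provided $\rho$ and $\eta$ are tuned to close the inequality $2 C_0 \rho^2 + \eta/M \le \rho^{2/(\gamma+1)}$. Interpolating between dyadic scales and restoring the $\|u\|_\infty$ factor gives the pointwise statement claimed in the theorem.

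The principal obstacle is ensuring uniform-in-$\gamma$ closure of the iteration step: as $\gamma \to 0^+$ the target exponent $2/(\gamma+1)$ approaches $2$, so the positive gap $\rho^{2/(\gamma+1)} - 2 C_0 \rho^2$ collapses and one cannot fix $\rho$ and $M$ independently of $\gamma$ by elementary means. Overcoming this requires either a quantitative approximation lemma delivering $\eta$ that is explicitly small as a function of $\gamma$, or a sharper approximation scheme tailored to the $2/(\gamma+1)$-homogeneous profile of PME at free boundary points rather than by caloric functions alone; this fine balance is precisely the technical content of \cite[Theorem 2]{A19}.
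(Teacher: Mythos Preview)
The paper does not prove Theorem~\ref{growth}; it is quoted verbatim from \cite[Theorem~2]{A19} and used as a black box. Your outline is in the spirit of the geometric tangential method of \cite{A19,TU142}, and you correctly isolate the genuine obstruction at the end (the collapse of the gap $\rho^{2}$ versus $\rho^{2/(\gamma+1)}$ as $\gamma\to 0$, which prevents a $\gamma$-uniform closing of the iteration by naive caloric approximation).

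There is, however, a concrete computational error earlier in your sketch that breaks the scheme before that point. Under the intrinsic rescaling $u_r(y,s)=r^{-2/(\gamma+1)}u(ry,r^{2/(\gamma+1)}s)$ one has
\[
(u_r)_s-\Div\big(u_r^{\gamma}\nabla u_r\big)=f\big(ry,r^{2/(\gamma+1)}s\big),
\]
so the $L^\infty$ norm of the source is \emph{unchanged}, not multiplied by $r^{2/(\gamma+1)}$; this is exactly the computation carried out in the proof of Theorem~\ref{Liouville} of the present paper. In the two-parameter family $v(y,s)=\lambda u(ry,\lambda^{\gamma}r^{2}s)$ the new source is $\lambda^{\gamma+1}r^{2}f$, and the intrinsic choice $\lambda=r^{-2/(\gamma+1)}$ is precisely the one that makes this factor equal to $1$. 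Hence along your iteration $\|f_k\|_\infty$ does not tend to zero, and the compactness lemma as you stated it (requiring $\|f_k\|_\infty\to 0$) is not applicable at each step. One can instead run the compactness with $\|f\|_\infty\le L$ fixed and let $\varepsilon=\varepsilon(L,\eta)$ (the theorem allows $\varepsilon,C$ to depend on $\|f\|_\infty$), but then the limit $h$ solves $h_t-\Delta h=f_\infty$; at a nonnegative interior minimum one only gets $h_t(0,0)\le 0\le \Delta h(0,0)$, so the linear-in-$t$ term in the Taylor expansion survives and contributes at order $\rho^{2/(\gamma+1)}$ on $G_\rho$, which is exactly the rate you are trying to beat. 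Both this and the $\gamma$-uniformity issue you flag point to the same remedy you hint at in your final sentence: the iteration in \cite{A19} must be organized around profiles with the correct $2/(\gamma+1)$ homogeneity rather than bare caloric comparison, and that refinement is the substantive content you are deferring to.
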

We finish this section with the following result from \cite[Proposition 16.2]{DGV12}, which will later be used to prove that for the homogeneous equation, for the Liouville type result to hold, it is enough to assume the sharp asymptotic growth at infinity only in the space variable.
\begin{proposition}\label{dibenedetto}
	If $u\ge0$ is a weak solution of 
	$$
	u_t-\Div(u^\gamma\nabla u)=0
	$$
	in a strip $S_T$, $T\in\R$, and $\displaystyle\inf_{S_T}u=0$, then
	$$
	\lim_{t\rightarrow-\infty}u(x,t)=0,\,\,\,\text{ for all }\,\,\,x\in\R^n,
	$$
	and the limit is uniform in $x$ ranging over a compact set $Q\in\R^n$ such that $Q\times\{\tau\}$ is included in a $(y,s)$-paraboloid
	$$
	P(y,s):=\left\{(x,t)\in S_T;\,\,t-s\le-\left(\frac{c}{u(y,s)}\right)^\gamma|x-y|^2\right\}
	$$
	for $(y,s)\in S_T$ such that $u(y,s)>0$ for some $\tau<s$.
\end{proposition}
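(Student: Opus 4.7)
The proof I would pursue combines the intrinsic Harnack inequality of DiBenedetto--Gianazza--Vespri for nonnegative weak solutions of the homogeneous porous medium equation with the geometric interpretation of the paraboloid $P(y,s)$. The paraboloid encodes exactly the intrinsic backward time-scale $\tau \sim r^2 u(y,s)^{-\gamma}$ at which Harnack estimates hold for the PME, so its appearance is not coincidental but is rather dictated by the equation's scaling.

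Fix $x \in \R^n$ and let $\epsilon>0$. Because $\inf_{S_T} u = 0$, there exists $(y_\epsilon, s_\epsilon) \in S_T$ with $u(y_\epsilon, s_\epsilon) \leq \epsilon$. Applying the intrinsic Harnack inequality (in its upper form, equivalently a Moser-type sup-bound for nonnegative sub-solutions after appropriate localization) centered at $(y_\epsilon, s_\epsilon)$, I would obtain a bound of the form
\[
\sup_{B_r(y_\epsilon)\times[s_\epsilon - c\,r^2\epsilon^{-\gamma},\,s_\epsilon]} u \;\leq\; C\,\epsilon,
\]
for universal $c, C > 0$. The time interval lengthens as $\epsilon \to 0$, which is precisely the mechanism that pushes the estimate arbitrarily far back in time. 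If $x \in B_r(y_\epsilon)$, we read off $u(x,t) \leq C\epsilon$ for $t \in [s_\epsilon - cr^2\epsilon^{-\gamma}, s_\epsilon]$; in general, I would chain finitely many such estimates spatially by covering a compact neighborhood of $x$ with overlapping intrinsic balls, each anchored at a point where $u$ is comparably small. The finite chain does not worsen the multiplicative constant in an essential way, and letting $\epsilon \to 0$ then yields $u(x,t) \to 0$ as $t \to -\infty$.

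For the uniformity on compact sets $Q$ with $Q \times \{\tau\} \subset P(y,s)$, the geometric hypothesis is exactly the admissibility condition for applying the intrinsic Harnack centered at $(y,s)$: the defining inequality $t - s \leq -(c/u(y,s))^\gamma |x-y|^2$ places $(x,t)$ inside the intrinsic backward cylinder anchored at $(y,s)$ and scaled by $u(y,s)$. The Harnack estimate then produces a uniform-in-$x \in Q$ bound $u(x,\tau) \leq C\,u(y,s)$, and substituting $(y,s)=(y_\epsilon,s_\epsilon)$ with $u(y_\epsilon,s_\epsilon)\leq \epsilon$ gives uniform decay.

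The main obstacle is the intrinsic nature of the Harnack inequality: each application uses a scale $r$ tied to the value of $u$ at the center, so the spatial chaining is delicate, and one has to verify that the resulting composite paraboloid still covers the desired compact set while the intrinsic cylinders remain inside $S_T$. A cleaner way to bypass repeated chaining is to appeal directly to the global paraboloidal comparison of DiBenedetto (Chapter~16 of \cite{DGV12}), which furnishes a one-shot estimate over $P(y,s)$ and is presumably what fixes the precise shape of the paraboloid appearing in the statement.
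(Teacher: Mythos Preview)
The paper does not prove this proposition at all: it is quoted verbatim as ``the following result from \cite[Proposition~16.2]{DGV12}'' and used as a black box in the proof of Theorem~\ref{t3.2}. So there is no in-paper argument to compare your proposal against; you are effectively trying to reconstruct the proof that lives in DiBenedetto--Gianazza--Vespri, and indeed your closing paragraph correctly identifies Chapter~16 of \cite{DGV12} as the actual source of the paraboloidal estimate.

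As for your sketch itself, the overall strategy---intrinsic Harnack plus the observation that the paraboloid $P(y,s)$ is precisely the admissible backward region dictated by the PME scaling---is the right one and matches the spirit of the DGV argument. One point deserves more care: your first step applies a backward sup-estimate centered at a near-infimum point $(y_\epsilon,s_\epsilon)$, but the intrinsic Harnack inequality requires $u(y_\epsilon,s_\epsilon)>0$ to define the waiting time, and even then it bounds $u$ on a cylinder of \emph{spatial} radius $r$ around $y_\epsilon$, which need not contain your target point $x$. The ``finite spatial chain'' you invoke is not automatic, because each link needs a center where $u$ is already known to be small, and a priori you only have one such point. The cleaner route---which you yourself flag---is to argue by contradiction using the \emph{forward} Harnack inequality: if $u(x,t_k)\ge\delta>0$ along $t_k\to-\infty$, the forward estimate propagates a uniform positive lower bound over expanding regions that eventually contradict $\inf_{S_T}u=0$. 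That is closer to how the result is actually obtained in \cite{DGV12}.
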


\section{Liouville type theorems}\label{s3}
In this section we prove that if a non-negative weak solution of the inhomogeneous porous medium type equation in a strip vanishes at a point and has a certain intrinsic growth at infinity, then it must vanish at all ``ancient'' times, provided the equation is universally close to the heat equation. Our result is optimal, i.e., the growth condition at infinity cannot be weakened. Furthermore, for the homogeneous equation, we show that for the conclusion to hold, it is enough to assume the asymptotic growth at infinity only in the space variable. Thus, we assume that $\gamma\ge0$ is small enough, so we are in the regularity regime of Theorem \ref{growth}.
\begin{theorem}\label{Liouville}
	Let $\gamma\ge0$ be as in Theorem \ref{growth}, $T\in\R$, and $u\ge0$ be a weak solution of
	$$
	u_t-\Div(u^\gamma\nabla u)=f
	$$
	in $S_T$, where $f\in L^\infty(S_T)$. If $u(x_0,t_0)=0$ for some $(x_0,t_0)\in S_T$ and
	\begin{equation}\label{intrinsicgrowth}
		u(x,t)=o\left(\|(x,t)\|^{\frac{2}{\gamma+1}}\right),\,\,\,\text{ as }\,\,\,\|(x,t)\|\rightarrow\infty, 
	\end{equation}
	then $u\equiv0$ in $\overline{S}_{t_0}$.
\end{theorem}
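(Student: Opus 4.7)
My plan is to prove the theorem by an intrinsic blow-down centered at a free-boundary point of $u$. Specifically, at any $(x^{\ast},t^{\ast})\in\partial\{u>0\}\cap\overline{S}_{t_0}$ I rescale so that Theorem \ref{growth} applies at unit scale, and then send the zoom-out parameter $R\to\infty$; the growth hypothesis \eqref{intrinsicgrowth} is exactly what is needed to force the rescaled $L^{\infty}$-norm to vanish in the limit, and this transfers, by unscaling, to the vanishing of $u$ on the backward strip $\overline{S}_{t^{\ast}}$.

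\textbf{Main step.} For $R>0$, I set
$$
v_R(x,t)\,:=\,R^{-\frac{2}{\gamma+1}}\,u\!\left(x^{\ast}+Rx,\;t^{\ast}+R^{\frac{2}{\gamma+1}}t\right).
$$
A direct computation shows that $v_R\ge 0$ is a weak solution in $G_1$ of $\partial_t v_R-\Div(v_R^{\gamma}\nabla v_R)=f_R$, with $\|f_R\|_{\infty}\le\|f\|_{\infty}$. Since $(x^{\ast},t^{\ast})\in\partial\{u>0\}$, one has $v_R(0,0)=0$ and $(0,0)\in\partial\{v_R>0\}\cap G_{1/2}$ for every $R$; Theorem \ref{growth} then delivers a constant $C=C(\|f\|_{\infty},n)$, crucially independent of $R$, such that
$$
v_R(x,t)\,\le\,C\,\|v_R\|_{L^{\infty}(G_1)}\,\|(x,t)\|^{\frac{2}{\gamma+1}}\quad\text{for every }(x,t)\in G_{1/5}.
$$
For a fixed $(y,s)\in\overline{S}_{t^{\ast}}$, choosing $R$ so that $\bigl((y-x^{\ast})/R,(s-t^{\ast})/R^{2/(\gamma+1)}\bigr)\in G_{1/5}$ and un-scaling the previous inequality gives
$$
u(y,s)\,\le\,C\,\|v_R\|_{L^{\infty}(G_1)}\,\|(y-x^{\ast},s-t^{\ast})\|^{\frac{2}{\gamma+1}}.
$$
Using $\|(z,w)\|\le\|(x^{\ast},t^{\ast})\|+\|(z-x^{\ast},w-t^{\ast})\|\lesssim R$ on $G_R(x^{\ast},t^{\ast})$, together with \eqref{intrinsicgrowth} and the local boundedness of $u$, I obtain $\|v_R\|_{L^{\infty}(G_1)}=R^{-2/(\gamma+1)}\sup_{G_R(x^{\ast},t^{\ast})}u=o(1)$ as $R\to\infty$, hence $u(y,s)=0$. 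Therefore $u\equiv 0$ on $\overline{S}_{t^{\ast}}$.

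\textbf{From $\overline{S}_{t^{\ast}}$ to $\overline{S}_{t_0}$, and main obstacle.} If $\{u>0\}\cap\overline{S}_{t_0}=\emptyset$, the theorem is already proved. Otherwise, applying the intermediate-value theorem to $u$ along a continuous path in $\overline{S}_{t_0}$ joining $(x_0,t_0)$ to a positive point produces some free-boundary point in $\overline{S}_{t_0}$, to which the main step applies. Setting $T:=\sup\{\tau\le t_0:u\equiv 0\text{ on }\overline{S}_{\tau}\}$, continuity forces $u\equiv 0$ on $\overline{S}_T$, and any $T<t_0$ contradicts maximality by running the same path argument inside the connected slab $\R^n\times(T,t_0]$. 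The most delicate point of the proof is the passage $\|v_R\|_{L^{\infty}(G_1)}\to 0$: the growth hypothesis \eqref{intrinsicgrowth} is anchored at the origin, whereas the rescaling is based at the possibly distant point $(x^{\ast},t^{\ast})$. The gap is bridged by a triangle-type inequality for the intrinsic norm $\|\cdot\|$ (valid in the small-$\gamma$ regime considered here, since $(\gamma+1)/2<1$) together with the local boundedness of $u$ to absorb lower-order terms.
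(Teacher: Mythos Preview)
Your proposal is correct and follows essentially the same route as the paper: intrinsic blow-down, application of Theorem~\ref{growth} at the zero point, and use of \eqref{intrinsicgrowth} to drive the rescaled $L^\infty$-norm to zero, which un-scales to $u\equiv 0$ on the backward strip. The only difference is organizational: the paper centers the rescaling directly at $(x_0,t_0)$ (making your path/continuation step to pass from $\overline{S}_{t^\ast}$ to $\overline{S}_{t_0}$ unnecessary) and proves $\|u_k\|_\infty\to 0$ by a short contradiction argument rather than your direct triangle-inequality estimate, but these are cosmetic variations of the same idea.
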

\begin{proof}
	Set 
	$$
	\sigma:={\frac{2}{\gamma+1}}
	$$ 
	and for $k\in\mathbb{N}$ define	
	$$
	u_k(x,t):=k^{-\sigma}u(kx+x_0,k^\sigma t+t_0)
	$$
	Direct computation shows 
	$$
	(u_k)_t-\Div(u_k^\gamma\nabla u_k)=u_t-\gamma k^{2(1-\sigma)-\sigma(\gamma-1)}u^{\gamma-1}|\nabla u|^2-k^{2-\sigma-\gamma\sigma}u^\gamma\Delta u,
	$$
	also
	$$
	2(1-\sigma)-\sigma(\gamma-1)=0\,\,\,\text{ and }\,\,\,2-\sigma-\gamma\sigma=0,
	$$
	therefore,
	\begin{equation*}
		\begin{split}
	(u_k)_t-\Div(u_k^\gamma\nabla u_k)&=u_t-\gamma u^{\gamma-1}|\nabla u|^2-u^\gamma\Delta u\\
	&=u_t-\Div(u^\gamma\nabla u)\\
	&=f(kx+x_0,k^\sigma t+t_0),
		\end{split}
	\end{equation*}	
	where the right hand side is bounded independent of $k$. Observe also that $u_k(0,0)=0$. We divide the rest of the proof into two steps. 
	
	\textit{Step 1.} We claim that in $G_{1/5}$ there holds
	\begin{equation}\label{bounded}
		\|u_k\|_\infty\rightarrow0,\,\,\,\textrm{ as }\,\,\,k\rightarrow\infty.
	\end{equation}
    Indeed, if that is not the case, then there is a constant $c>0$, a sequence $k_i\rightarrow\infty$ and points $(x_i,t_i)\in G_{1/5}$, such that 
    $$
    u_{k_i}(x_i,t_i)\ge c>0,\,\,\,\textrm{ as }\,\,\,k_i\to\infty.
    $$
    Hence, from the definition of $u_k$ it follows that the sequence $(k_ix_i,k_i^\sigma t_i)$ is unbounded. Therefore, recalling the definition of the intrinsic norm, up to a subsequence, we must have
    $$
    \|(k_ix_i,k_i^\sigma t_i)\|=|k_ix_i|+|k_i^\sigma t_i|^{\frac{1}{\sigma}}=k_i\|(x_i,t_i)\|\to\infty.
    $$
    The latter, combined with \eqref{intrinsicgrowth}, yields
    \begin{equation*}
    	\begin{split}    
    u_{k_i}(x_i,t_i)&=k_i^{-\sigma} u(k_ix_i+x_0,k_i^\sigma t_i+t_0)\\
    &=k_i^{-\sigma}o\left(\|(k_ix_i,k_i^\sigma t_i)\|^\sigma\right)\\
    &=o\left(\|(x_i,t_i)\|^\sigma\right)\\
    &=o(1),
    	\end{split}
	\end{equation*}
    a contradiction.    
    
    \textit{Step 2.} For any given $(y,\tau)\in\overline{S}_{t_0}$, one can choose $k\in\mathbb{N}$ big enough to guarantee 
    $$
    \left(k^{-1}(y-x_0),k^{-\sigma}(\tau-t_0)\right)\in G_{1/5}.
    $$
    Using Theorem \eqref{growth}, we then estimate
    \begin{equation*}
    	\begin{split}
    		u(y,\tau)&=k^\sigma u_k\left(k^{-1}(y-x_0),k^{-\sigma}(\tau-t_0)\right)\\
    		&\le C\|u_k\|_\infty\|\left(y-x_0,\tau-t_0\right)\|^\sigma.
    	\end{split}
    \end{equation*}	
    The latter with \eqref{bounded} implies $u(y,\tau)=0$.    
\end{proof}
As a consequence of Theorem \ref{Liouville}, we obtain the next result, which reveals that for the conclusion to hold for the homogeneous equation, it is enough to assume the sharp asymptotic growth only in the space variable.
\begin{theorem}\label{t3.2}
		Let $\gamma\ge0$ be as in Theorem \ref{growth}, and $u\ge0$ be a solution of
	$$
	u_t-\Div(u^\gamma\nabla u)=0
	$$
	in a strip $S_T$, $T\in\R$. If $u(x_0,t_0)=0$ for some $(x_0,t_0)\in S_T$, and
	\begin{equation}\label{spacegrowth}
		u(x,t)=o\left(|x|^{\frac{2}{\gamma+1}}\right),\,\,\,\text{ as }\,\,\,|x|\rightarrow\infty,
	\end{equation}
	uniformly in $t\in(-\infty,T)$, then $u\equiv0$ in $\overline{S}_{t_0}$.
\end{theorem}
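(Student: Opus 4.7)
The plan is to deduce Theorem \ref{t3.2} from Theorem \ref{Liouville} by upgrading the purely spatial decay \eqref{spacegrowth} to the full intrinsic decay \eqref{intrinsicgrowth}, using Proposition \ref{dibenedetto} to control the missing time direction. If $u\equiv 0$ there is nothing to prove, so I may pick $(y_0,s_0)\in S_T$ with $u(y_0,s_0)>0$. Since $u(x_0,t_0)=0$ and $u\ge 0$, one has $\inf_{S_T}u=0$, which activates Proposition \ref{dibenedetto}.

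First, I would observe that for every $R>0$, the slice $\overline{B}_R\times\{\tau\}$ lies inside the paraboloid $P(y_0,s_0)$ once $\tau$ is sufficiently negative, because the half-width of that paraboloid at time $\tau$ is of order $(s_0-\tau)^{1/2}$ and hence exceeds $R+|y_0|$ provided $s_0-\tau$ is large enough. Proposition \ref{dibenedetto} then delivers, for each $R>0$,
$$
\sup_{|x|\le R} u(x,t)\longrightarrow 0\quad\text{as }t\to-\infty.
$$

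Setting $\sigma:=2(\gamma+1)^{-1}$, I would next verify \eqref{intrinsicgrowth}. Given $\varepsilon>0$, the hypothesis \eqref{spacegrowth} (uniform in $t<T$) yields $R_1>0$ with $u(x,t)\le\varepsilon|x|^\sigma\le\varepsilon\|(x,t)\|^\sigma$ whenever $|x|\ge R_1$, while the uniform time-decay above produces $T_1<0$ with $u(x,t)\le\varepsilon$ whenever $|x|\le R_1$ and $t\le T_1$. Now for any point with $\|(x,t)\|$ sufficiently large, either $|x|\ge R_1$, in which case the first bound applies, or $|x|<R_1$, in which case $|t|^{(\gamma+1)/2}\ge \|(x,t)\|-R_1$ is large, so $t\le T_1$ and $\|(x,t)\|^\sigma\ge 1$, and the second bound gives $u\le\varepsilon\le\varepsilon\|(x,t)\|^\sigma$. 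A point with $|x|<R_1$ and $t\in(T_1,T)$ has $\|(x,t)\|$ bounded and so falls outside the asymptotic regime. Since $\varepsilon$ is arbitrary, \eqref{intrinsicgrowth} follows, and Theorem \ref{Liouville} applied with $f\equiv 0$ then forces $u\equiv 0$ on $\overline{S}_{t_0}$.

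The main obstacle I anticipate is the first step, namely confirming that the paraboloid condition in Proposition \ref{dibenedetto} is satisfied for compact sets of arbitrary diameter; this is what converts the qualitative statement $\inf_{S_T} u=0$ into the quantitative assertion that $u(\cdot,t)\to 0$ uniformly on any ball. Once this is established, gluing the spatial and temporal decays into the single intrinsic estimate is routine and the already-proved Theorem \ref{Liouville} closes the argument.
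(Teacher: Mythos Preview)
Your proposal is correct and follows essentially the same route as the paper: both reduce Theorem \ref{t3.2} to Theorem \ref{Liouville} by upgrading \eqref{spacegrowth} to \eqref{intrinsicgrowth}, using Proposition \ref{dibenedetto} (with a suitably chosen paraboloid vertex where $u>0$) to supply the missing decay as $t\to-\infty$ on bounded spatial regions. The only cosmetic difference is that the paper argues by contradiction---assuming a sequence $(x_i,t_i)$ along which $u(x_i,t_i)\ge c_*\|(x_i,t_i)\|^{2/(\gamma+1)}$ and splitting into the cases $|x_i|\to\infty$ versus $|x_i|$ bounded---whereas you give a direct $\varepsilon$-argument; the ingredients and logic are the same.
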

\begin{proof}
	Observe that \eqref{intrinsicgrowth} holds. Indeed, if that is not the case, then for a sequence $(x_i,t_i)$ and a constant $c_*>0$, one has
	\begin{equation}\label{contradiction}
		\frac{u(x_i,t_i)}{\|(x_i,t_i)\|^{\frac{2}{\gamma+1}}}\ge c_*>0,\,\,\,\textrm{ as }\,\,\,\|(x_i,t_i)\|\to\infty.
	\end{equation}
	As $\|(x_i,t_i)\|\to\infty$, then either $|x_i|\to\infty$ or $|x_i|<r$ for some $0<r<\infty$, and $t_i\to-\infty$. In the first case, combining \eqref{spacegrowth} with \eqref{contradiction}, we get a contradiction, since
	$$
	\frac{u(x_i,t_i)}{|x_i|^{\frac{2}{\gamma+1}}}\ge\frac{u(x_i,t_i)}{\|(x_i,t_i)\|^{\frac{2}{\gamma+1}}}.
	$$	
	In the second case, that is, when $|x_i|<r$ and $t_i\to-\infty$, we choose $\tau\in\R$ such that 
	$$
	\tau\le t_1-\left(\frac{c}{u(x_1,t_1)}\right)^\gamma(r+|x_1|)^2,
	$$	
	where $c>0$ is the universal constant from Proposition \ref{dibenedetto}. Since
	$$
	\tau-t_1\le-\left(\frac{c}{u(x_1,t_1)}\right)^\gamma|x-x_1|^2,\,\,\,\forall x\in\overline{B}_r,
	$$ 
	the set
	$\overline{B}_r\times\{\tau\}$ is contained in the paraboloid
	$$
	P(x_1,t_1):=\left\{(x,t)\in S_T;\,\,t-t_1\le-\left(\frac{c}{u(x_1,t_1)}\right)^\gamma|x-x_1|^2\right\}.
	$$	
	By Proposition \ref{dibenedetto},
	$$
	\lim_{t\rightarrow-\infty}u(x,t)=0,\,\,\,\textrm{for all}\,\,\,x\in\R^n,
	$$
	and the convergence is uniform in $x\in\overline{B}_r$. In particular,
	$$
	u(x_i,t_i)<1,
	$$
	for $i\in\mathbb{N}$ large enough, which contradicts to \eqref{contradiction}. Hence, \eqref{intrinsicgrowth} is true, and Theorem \ref{Liouville} implies $u\equiv0$ in $\overline{S}_{t_0}$.
\end{proof}

\medskip

\textbf{Acknowledgments.} DJA is partially supported by CNPq 311138/2019-5 and grant 2019/0014 Paraiba State Research Foundation (FAPESQ).

RT is partially supported by FCT - Funda\c{c}\~ao para a Ci\^encia e a Tecnologia, I.P., through projects PTDC/MAT-PUR/28686/2017 and UTAP-EXPL/MAT/0017/2017, as well as by the Centre for Mathematics of the University of Coimbra - UIDB/00324/2020, funded by the Portuguese Government through FCT/MCTES.

\end{document}